\newcommand{\floor}[1]{\left \lfloor #1 \right \rfloor}
\newcommand{\cal}{\mathcal}
\newcommand{\eps}{\varepsilon}
\newcommand{\beq}[1]{\begin{equation}\label{#1}}
\newcommand{\eeq}{\end{equation}}
\newtheorem{theorem}{Theorem}
\theoremstyle{definition}
\theoremstyle{remark}
\newtheorem{remark}[theorem]{Remark}
\begin{document}

\title{Counting unlabeled interval graphs}

\author{Huseyin Acan}
\address{Department of Mathematics, Rutgers University, Piscataway, NJ 08854}
\email{huseyin.acan@rutgers.edu}
\thanks{The author is supported by National Science Foundation Fellowship (Award No.~1502650).}

\subjclass[2010]{Primary 05C30; Secondary 05A16}
\keywords{Interval graphs, counting}
\date{}

\begin{abstract}
We improve the bounds on the number of interval graphs on $n$ vertices. In particular, denoting by $I_n$ the quantity in question, we show that $\log I_n \sim n\log n$ as $n\to \infty$.
\end{abstract}

\maketitle

A simple undirected graph is an \emph{interval graph} if it is isomorphic to the intersection graph of a family of intervals on the real line.
Several characterizations of interval graphs are known; see~\cite[Chapter 3]{MM} for some of them. 
Linear time algorithms for recognizing interval graphs are given in~\cite{BL} and \cite{HMPV}.

In this paper, we are interested in counting interval graphs.
Let $I_n$ denote the number of unlabeled interval graphs on $n$ vertices.
(This is the sequence with id A005975 in the On--Line Encyclopedia of Integer Sequences~\cite{OEIS}.)
Initial values of this sequence are given by Hanlon~\cite{Hanlon}.
Answering a question posed by Hanlon~\cite{Hanlon}, Yang and Pippenger~\cite{YP} proved that the generating function
\[
I(x)=\sum_{n\ge 1} I_nx^n
\] 
diverges for any $x\not=0$ and they established the bounds
\beq{Pip}
\frac{n\log n}{3}+O(n) \le \log I_n \le n\log n+O(n).
\eeq

The upper bound in~\eqref{Pip} follows from $I_n\le (2n-1)!!=\prod_{j=1}^{n} (2j-1)$, where the right hand side is the number of matchings on $2n$ points. For the lower bound, the authors showed
\[
I_{3k} \ge k!/3^{3k}
\]
by finding an injection from $S_k$, the set of permutations of length $k$, to three-colored interval graphs of size $3k$.

Using an idea similar to the one in~\cite{YP}, we improve the lower bound in~\eqref{Pip} so that the main terms of the lower and upper bounds match.
In other words, we find the asymptotic value of $\log I_n$.

For a set $S$, we denote by ${S \choose k}$ the set of $k$-subsets of $S$.

\begin{theorem}\label{main}
As $n\to \infty$, we have 
\beq{goal}
\log I_n \ge n\log n- 2n\log\log n- O(n).
\eeq
\end{theorem}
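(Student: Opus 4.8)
The plan is to strengthen the Yang--Pippenger scheme by encoding a permutation of \emph{nearly all} $n$ vertices---rather than a third of them---into an interval graph carrying only a modest number of colors, and then to forget the colors. Concretely, I would fix a color budget $c$ (to be optimized, and which will turn out to be $c=\lceil(\log n)^2\rceil$) and construct, for each permutation $\sigma$ of $[m]$ with $m=n-O(n/\log n)$, a $c$-colored interval graph $G_\sigma$ on $n$ vertices so that $\sigma\mapsto G_\sigma$ is injective up to color-preserving isomorphism. Since every unlabeled interval graph on $n$ vertices arises from at most $c^n$ colored ones (it admits at most $c^n$ vertex-colorings), this gives $I_n\ge m!/c^n$, and the theorem reduces to computing this bound.

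The main obstacle is that a permutation is an \emph{unordered} set of pairs $(i,\sigma(i))$, and in an unlabeled graph the content vertices carry no a priori labels, so each content vertex must itself encode both coordinates; the YP construction solves this with two full-length ``rulers,'' which is exactly the source of the factor-$3$ loss. My remedy is to keep two rulers but make them \emph{sublinear} using colors. I would build two reference paths $L_1-\cdots-L_p$ and $R_1-\cdots-R_p$, realized by short intervals so that each ruler is an induced path (hence recoverable up to reversal), with the $L$'s far to the left and the $R$'s far to the right. For each $i$ I add one content interval $w_i$ spanning the gap between the rulers, reaching left to overlap exactly a suffix of the $L$'s of length $a$ and right to overlap exactly a prefix of the $R$'s of length $b$, and I color $w_i$ with one of $c$ colors. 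The triple (left-reach $a$, right-reach $b$, color) takes more than $p^2c$ values, so as soon as $p^2c\ge m^2$ I can fix an injection from the $m^2$ possible pairs $(i,\sigma(i))$ into these triples; this permits $p\approx m/\sqrt c$, i.e.\ rulers of size $o(n)$. A bounded distinguishing gadget (a few extra vertices and colors) makes the two rulers and their orientations identifiable, so from $G_\sigma$ one recovers the ordered rulers, reads off each $w_i$'s triple, decodes the set $\{(i,\sigma(i))\}$, and hence $\sigma$, giving injectivity.

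For the accounting, $n=m+2p+O(1)$ with $p\approx m/\sqrt c$, so writing $t=2/\sqrt c$ for the ruler overhead we have $m=n(1-O(t))$, and $I_n\ge m!/c^n$ yields
\[
\log I_n \ge \log m! - n\log c = n\log n - \big(t\,n\log n - 2n\log t\big) - O(n).
\]
Minimizing $t\log n-2\log t$ over $t$ gives $t=2/\log n$, i.e.\ $c=\lceil(\log n)^2\rceil$, at which point the bracketed correction equals $2n\log\log n+O(n)$, which is exactly~\eqref{goal}. I expect the routine-but-fussy part to be choosing explicit interval coordinates that realize the prescribed overlaps and checking that $\sigma$ is recoverable from the unlabeled colored graph; the genuinely load-bearing idea is that coloring the content vertices lets the anchoring rulers shrink to size $o(n)$, and that balancing the ruler overhead $t\,n\log n$ against the uncoloring cost $n\log c$ pins $c$ at $(\log n)^2$ and produces the coefficient $2$.
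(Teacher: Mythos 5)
Your proposal is correct, and its geometric skeleton --- two rulers of length $\Theta(n/\log n)$ flanking a gap, with roughly $n$ content intervals spanning the gap whose positions are read off from their numbers of neighbors in each ruler --- is exactly the construction in the paper. Where you genuinely diverge is in what you count and how you pay for it. You follow Yang--Pippenger in encoding a \emph{permutation} of $[m]$, which forces each content vertex to carry both coordinates of a pair $(i,\sigma(i))$; since the rulers supply only $p^2$ adjacency patterns, you must augment them with a palette of $c\approx(\log n)^2$ colors so that $p^2c\ge m^2$, and you then pay $n\log c=2n\log\log n+O(n)$ when you uncolor. The paper abandons permutations entirely: it fixes the $k^2$ possible spanning intervals (with $k=\floor{n/\log n}$) and simply counts the ${k^2 \choose n-2k}$ ways to choose $n-2k$ of them, each such subset being recoverable because a spanning interval is already determined by its blue- and red-degrees. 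This needs only $3$ colors, no distinguishing gadget, no orientation recovery, and no injection into triples; the $2n\log\log n$ loss appears instead because $\log {k^2 \choose n-2k} = n\log(k^2/n)-O(n)$. The two losses coincide --- your raw family is larger ($m!$ versus ${k^2 \choose n-2k}$) by essentially the factor $(\log n)^{2n}$ that your larger palette costs --- so the final bounds agree, and the paper's version is simply much shorter to verify. One simplification available to you: with pairwise-disjoint ruler intervals you never need the rulers to be induced paths, nor the orientation gadget, since the left- and right-reaches of a content vertex are just its numbers of neighbors of the two ruler colors, which are already isomorphism-invariant.
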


\begin{proof}
We consider certain interval graphs on $n$ vertices with colored vertices. Let $k$ be a positive integer smaller than $n/2$ and $\eps$ a positive constant smaller than $1/2$.
For $1\le j\le k$, let $B_j$ and $R_j$ denote the intervals $[-j-\eps,-j+\eps]$ and $[j-\eps,j+\eps]$, respectively. 
These $2k$ pairwise-disjoint intervals will make up $2k$ vertices in the graphs we consider.
Now let $\cal W$ denote the set of $k^2$ closed intervals with one endpoint in $\{-k,\dots,-1\}$ and the other in $\{1,\dots,k\}$.
We color $B_1,\dots,B_k$ with blue, $R_1,\dots,R_k$ with red, and the $k^2$ intervals in $\cal W$ with white.

Together with $\cal S:=\{B_1,\dots,B_k,R_1,\dots,R_k\}$, each $\{J_1,\dots,J_{n-2k}\} \in {\cal W \choose n-2k}$ gives an $n$-vertex, three-colored interval graph.
For a given $\cal J=\{J_1,\dots,J_{n-2k}\}$, let $G_{\cal J}$ denote the colored interval graph whose vertices correspond to $n$ intervals in $\cal S \cup \cal J$, and let $\cal G$ denote the set of all $G_{\cal J}$.

Now let $G\in \cal G$. For a white vertex $w \in G$, the pair $(d_B(w),d_R(w))$, which represents the numbers of blue and red neighbors of $w$, uniquely determine the interval corresponding to $w$;
this is the interval $[-d_B(w),d_R(w)]$.
In other words, $\cal J$ can be recovered from $G_{\cal J}$ uniquely. Thus
\[
|\cal G| = {k^2 \choose n-2k}.
\]
Since there are at most $3^n$ ways to color the vertices of an interval graph with blue, red, and white, we have
\[
I_n\cdot 3^n \ge |\cal G| = {k^2 \choose n-2k}\ge \left(\frac{k^2}{n-2k}\right)^{n-2k} \ge \left(\frac{k^2}{n}\right)^{n}
\]
for any $k< n/2$. Setting $k=\floor{n/\log n}$ and taking the logarithms, we get
\[
\log I_n \ge n\log(k^2/n)-O(n)= n\log n-2n\log\log n -O(n). \qedhere
\]
\end{proof}

\begin{remark}
Yang and Pippenger~\cite{YP} posed the question whether
\[
\log I_n=Cn\log n+O(n)
\]
for some $C$ or not. According to Theorem~\ref{main}, this boils down to getting rid of the $2n\log\log n$ term in~\eqref{goal}. 
Such a result would imply that the exponential generating function
\[
J(x)=\sum_{n\ge 1} I_n\frac{x^n}{n!}
\]
has a finite radius of convergence. (As noted in~\cite{YP}, the bound $I_n\le (2n-1)!!$ implies that the radius of convergence of $J(x)$ is at least $1/2$.)
Of course, a strong result would be finding $I_n$ asymptotically.
\end{remark}

\bibliographystyle{amsplain}

\end{document}